\newtheorem{theorem}{Theorem}[section]
\newtheorem{proposition}[theorem]{Proposition}
\newtheorem{lemma}[theorem]{Lemma}
\theoremstyle{remark}
\newtheorem{remark}[theorem]{Remark}
\theoremstyle{example}
\numberwithin{equation}{section}
\begin{document}

%\end{document}

\begin{frontmatter}

%
% ORCIDs Jan:  0000-0002-7972-6183, Andrew: 0000-0002-5902-1522

\title{Open Boundary Conditions for Nonlinear Initial Boundary Value Problems}

% or include affiliations in footnotes:
\author[sweden,southafrica]{Jan Nordstr\"{o}m}%\corref{firstcorrespondingauthor}}
%\cortext[firstcorrespondingauthor]{Corresponding author}
%\ead{jan.nordstrom@liu.se}
%\author[sweden]{Andrew R.~Winters\corref{secondcorrespondingauthor}}
\cortext[secondcorrespondingauthor]{Corresponding author}
\ead{jan.nordstrom@liu.se}
\address[sweden]{Department of Mathematics, Applied Mathematics, Link\"{o}ping University, SE-581 83 Link\"{o}ping, Sweden}
\address[southafrica]{Department of Mathematics and Applied Mathematics, University of Johannesburg, P.O. Box 524, Auckland Park 2006, Johannesburg, South Africa}

%  abstract P.O. Box 524, Auckland Park 2006, Johannesburg, South Africa
\begin{abstract}
We present a straightforward energy stable weak implementation procedure of open boundary conditions for nonlinear initial boundary value problems.
It simplifies previous work  and its practical implementation.
 \end{abstract}

% keywords
\begin{keyword}
Nonlinear boundary conditions  \sep shallow water equations  \sep Euler equations \sep Navier-Stokes equations \sep  energy stability  \sep summation-by-parts
\end{keyword}

% REQUIRED
%\begin{AMS}
%   35L50, 35L60
%\end{AMS}

\end{frontmatter}

%\end{document}

%%%%%%%%%%%%%%%%%%%%%%%%%%%%%%%%%%%%%%%%%%%%%%%%%%%%%%%%%%%%%%%%%%%%%%%%
\section{Introduction}
%%%%%%%%%%%%%%%%%%%%%%%%%%%%%%%%%%%%%%%%%%%%%%%%%%%%%%%%%%%%%%%%%%%%%%%%
To bound nonlinear initial boundary value problems (IBVPs) with inflow-outflow boundary conditions involving non-zero data is maybe the most crucial task when aiming for  stability, see \cite{HEDSTROM1979222,Dubois198893,NYCANDER2008108,Nordstrom2013,svard2014,nordstrom2019,svard2021entropy,nordstrom2022linear} for previous efforts on this subject.
%The energy method \cite{kreiss1970,gustafsson1995time,nordstrom2020,nordstrom2005} applied to linear IBVPs provides $L_2 $ energy estimates 
%in an $L_2 $ equivalent norm 
%and well posed boundary conditions. For nonlinear problems \cite{nordstrom2019, nordstrom2020spatial, Lauren2021} it require certain symmetry properties of the matrices involved.  %limit the use of the energy method and occasionally
%This prompted interest in entropy and kinetic energy estimates. Entropy estimates occasionally provide $L_2 $ estimates \cite{harten1983,Tadmor2003,nordstrom2022linear}  and boundary conditions while kinetic energy estimates \cite{Jameson2008188,PIROZZOLI20107180} do not. In most papers boundary conditions of inflow-outflow with non-zero data are avoided.
In  \cite{NORDSTROM2024_BC}, new nonlinear boundary conditions and boundary procedures leading to energy bounds were presented. 
%The procedure was exemplified on  the shallow water (SW), Euler and Navier-Stokes equations \cite{nordstrom2022linear-nonlinear,Nordstrom2022_Skew_Euler,NORDSTROM2024_BC,nordstrom2024skewsymmetric_jcp}.
Following \cite{nordstrom_roadmap}, the continuous procedure  was mimicked (using summation-by-parts operators and weak numerical boundary conditions) leading to provable energy stability. However, both the boundary condition and weak implementation technique in \cite{NORDSTROM2024_BC} were described in a rather complicated way. In this note we simplify the formulation and present a more user friendly procedure. 
%We also show that  the proofs in  \cite{NORDSTROM2024_BC} still hold.

%The main theory is untouched by the simplifications and the previous proofs still hold.
 
%The rest of the note is organised as follows: In Section~\ref{sec:theory} summarize the main content of the previously published boundary procedure in \cite{NORDSTROM2024_BC}.  In Section~\ref{numerics} we show that the energy bounded continuous formulation leads to  nonlinear stability of an SBP-SAT based scheme, including non-zero boundary data. A summary is provided in Section~\ref{sec:conclusion}.

%%%%%%%%%%%%%%%%%%%%%%%%%%%%%%%%%%%%%%%%%%%%%%%%%%%%%%%%%%%%%%%%%%%%%%%%
\section{Short summary of  previous results}\label{sec:theory}
%%%%%%%%%%%%%%%%%%%%%%%%%%%%%%%%%%%%%%%%%%%%%%%%%%%%%%%%%%%%%%%%%%%%%%%%
Following \cite{NORDSTROM2024_BC} we consider the nonlinear IBVP posed on the domain $ \Omega$ with open boundary $\partial\Omega$
%We restrict the analysis to the hyperbolic part of IBVPs, where the nonlinearity normally resides. 
%The parabolic (viscous) part could be added on and properly posed provide dissipation or damping effects (although it can complicate the boundary treatment \cite{kreiss1989initial,Gustafsson1978,nordstrom2020,nordstrom2005,nordstrom2019,nordstrom2020spatial,Lauren2021}).
\begin{equation}\label{eq:nonlin}
P U_t + (A_i(U)U)_{x_i}+A_i(U)^TU_{x_i}+C(U)U= \epsilon (D_i(U))_{x_i},  \quad t \geq 0,  \quad  \vec x=(x_1,x_2,..,x_k) \in \Omega
\end{equation}
augmented with the initial condition $U(\vec x,0)=F(\vec x)$ in $\Omega$ and  the non-homogeneous boundary condition
\begin{equation}\label{eq:nonlin_BC}
B(U) U - G(\vec x,t)=0,  \quad t \geq 0,  \quad  \vec x=(x_1,x_2,..,x_k) \in  \partial\Omega.
\end{equation}
In (\ref{eq:nonlin_BC}), $B$ is the nonlinear boundary operator and $G$ the boundary data. In (\ref{eq:nonlin}), Einsteins summation convention is used and $P$ is a symmetric positive definite (or semi-definite) time-independent matrix that defines an energy norm (or semi-norm) $\|U\|^2_P= \int_{\Omega} U^T P U d\Omega$. 
%Note in particular that $P$  is not a function of the solution vector $U$. 
The $n \times n$ matrices $A_i,C$ and the boundary operator $B$  are smooth functions of the smooth $n$ component vector $U$. The viscous fluxes (or stresses) with first derivatives are included in  $D_i$ which satisfies $U_{x_i} ^T D_i  \geq 0$ while $\epsilon$ 
%(the inverse of the Reynolds number) 
is a non-negative constant parameter.
% measuring the influence of viscous forces. 
\begin{remark}
The formulation  (\ref{eq:nonlin}) can be derived for the shallow water, Euler and Navier-Stokes equations  \cite{nordstrom2022linear-nonlinear,Nordstrom2022_Skew_Euler,nordstrom2024skewsymmetric_jcp}. It enables the direct use of the Green-Gauss theorem leading to an energy-rate as shown in \cite{NORDSTROM2024_BC}. However, we stress that the new boundary treatment that we will present below can be applied to any nonlinear problem as long as it's energy rate only depends the nonlinear surface term.

 %\cite{Nordstrom2022_Skew_Euler,NORDSTROM2024_BC,nordstrom2024skewsymmetric_jcp}.
%the shallow water, Euler and Navier-Stokes equations \cite{nordstrom2022linear-nonlinear,Nordstrom2022_Skew_Euler,NORDSTROM2024_BC,nordstrom2024skewsymmetric_jcp}.
\end{remark}

%do not include specific details, except for the skew-symmetric form of the inviscid terms and the symmetric form of the viscous terms. 
%Formulation  (\ref{eq:nonlin} )includes the shallow water, Euler and Navier-Stokes equations \cite{nordstrom2022linear-nonlinear,Nordstrom2022_Skew_Euler,NORDSTROM2024_BC,nordstrom2024skewsymmetric_jcp}.
%\end{remark}
%%%%%%%%%%%%%%%%%%%%%%%%%%%%%%%%%%%%%%%%%%%%%%%%%%%%%%%%%%%%%%%%%%%%%%%%
\subsection{The energy rates}\label{sec:theory_inviscid}
% $\epsilon \rightarrow 0$
%%%%%%%%%%%%%%%%%%%%%%%%%%%%%%%%%%%%%%%%%%%%%%%%%%%%%%%%%%%%%%%%%%%%%%%%
For completeness and clarity we repeat the proof (from  \cite{NORDSTROM2024_BC}) of the following proposition.
\begin{proposition}\label{lemma:Matrixrelation_complemented}
The IBVP  (\ref{eq:nonlin}) with  $C+C^T = 0$ has an energy rate that only depends on (\ref{eq:nonlin_BC}). 
%The volume term in the energy rate is dissipative.
\end{proposition}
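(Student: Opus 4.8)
The plan is to compute the time derivative of the energy $\|U\|_P^2$ directly from the PDE and show that, under the assumption $C + C^T = 0$, all volume contributions cancel, leaving only a surface integral over $\partial\Omega$ whose integrand is built from the boundary flux terms. Since the claim is precisely that the energy rate depends only on the boundary condition (\ref{eq:nonlin_BC}), the goal is to reduce $\frac{d}{dt}\|U\|_P^2$ to a pure boundary term.

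First I would differentiate under the integral sign, using that $P$ is symmetric and time-independent, to get $\frac{d}{dt}\|U\|_P^2 = \int_\Omega (U_t^T P U + U^T P U_t)\,d\Omega = 2\int_\Omega U^T P U_t\,d\Omega$. Next I would substitute $P U_t$ from the governing equation (\ref{eq:nonlin}), so that the right-hand side becomes $-2\int_\Omega U^T\big[(A_i(U)U)_{x_i} + A_i(U)^T U_{x_i} + C(U)U\big]\,d\Omega + 2\epsilon\int_\Omega U^T (D_i(U))_{x_i}\,d\Omega$. The term involving $C$ contributes $-2\int_\Omega U^T C U\,d\Omega$, which vanishes identically because $U^T C U = \frac{1}{2}U^T(C + C^T)U = 0$ under the hypothesis; this is where the antisymmetry assumption is used.

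The main work is the advective pair. I would observe that $U^T(A_i U)_{x_i} + U^T A_i^T U_{x_i} = U^T(A_i U)_{x_i} + (A_i U)^T U_{x_i} = \big(U^T A_i U\big)_{x_i}$, so the two flux terms combine into a perfect spatial divergence. This is the key algebraic identity that makes the skew-symmetric-type splitting work, and it is the step I expect to be the crux of the argument. Applying the Green-Gauss theorem then converts $\int_\Omega (U^T A_i U)_{x_i}\,d\Omega$ into a boundary integral $\oint_{\partial\Omega} U^T A_i U\, n_i\, dS$, eliminating the interior advective contribution.

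For the viscous term I would integrate by parts once more, writing $\int_\Omega U^T (D_i)_{x_i}\,d\Omega = \oint_{\partial\Omega} U^T D_i\, n_i\, dS - \int_\Omega U_{x_i}^T D_i\,d\Omega$, where the remaining volume integral is nonpositive because $U_{x_i}^T D_i \geq 0$ by assumption and thus only strengthens the bound rather than introducing an uncontrolled interior term. Collecting everything, the energy rate reduces to $\frac{d}{dt}\|U\|_P^2 = -\oint_{\partial\Omega}\big(U^T A_i U - 2\epsilon\, U^T D_i\big) n_i\, dS - 2\epsilon\int_\Omega U_{x_i}^T D_i\,d\Omega$, so that apart from the dissipative volume term the rate is governed entirely by surface data, which is exactly what must be matched against (\ref{eq:nonlin_BC}). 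The only delicate point is bookkeeping the signs and confirming that the divergence identity holds componentwise under Einstein summation; once that is in hand the result follows immediately.
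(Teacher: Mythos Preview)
Your argument is correct and matches the paper's proof: both multiply the equation by $U^T$, use the skew-symmetric structure so that the advective volume terms combine into a pure divergence (equivalently, the paper integrates by parts on $(A_iU)_{x_i}$ and then cancels $U_{x_i}^T A_i U - U^T A_i^T U_{x_i}$ as a scalar minus its transpose), apply Green--Gauss, and eliminate $U^T C U$ via $C+C^T=0$. The only slip is a missing factor of $2$ on the advective boundary term in your final displayed formula; with that corrected your identity agrees with the paper's equation~(\ref{eq:boundaryPart1}).
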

%\begin{proof}
%It follows directly by IBP, for details see \cite{nordstrom2022linear-nonlinear}.
%\end{proof}
\begin{proof}
The energy method applied to (\ref{eq:nonlin}) and Green-Gauss formula  leads to the following energy rate
\begin{equation}\label{eq:boundaryPart1}
\frac{1}{2} \frac{d}{dt}\|U\|^2_P +  \epsilon  \int\limits_{\Omega}U_{x_i} ^T D_i d \Omega + \oint\limits_{\partial\Omega}U^T  (n_i A_i)  \\\ U  -   \epsilon U^T (n_i D_i) \\\ ds= \int\limits_{\Omega}(U_{x_i}^T  A_i U - U^T A_i^T U_{x_i})-U^T  C U \\\ d \Omega,
%\int\limits_{\Omega}(U_{x_i}^T  A_i U - U^T A_i^T U_{x_i}) \\\ d \Omega -\int\limits_{\Omega} U^T  C U \\\ d \Omega,
\end{equation}
where $\vec n =(n_1,..,n_k)^T$ is the outward pointing unit normal. The first volume term on the right-hand side of (\ref{eq:boundaryPart1}) cancel by the skew-symmetric formulation and the second one by the condition $C+C^T = 0$.
%which proves the claim.
\end{proof}
\begin{remark}
Since the right-hand side of (\ref{eq:boundaryPart1}) vanish and $ U_{x_i} ^T D_i \geq 0$ provide dissipation, Proposition \ref{lemma:Matrixrelation_complemented} show that an energy-estimate is obtained if the surface terms are bounded by external data as shown in  \cite{NORDSTROM2024_BC}. 
\end{remark}
\begin{remark}
We stress again that the new boundary treatment that we will present in Section 3, is not specifically dependent on the formulation (\ref{eq:nonlin}). It can be applied to any nonlinear problem as long as one can derive an energy rate with the structure in (\ref{eq:boundaryPart1}), i.e. which depends only on a nonlinear surface term.
\end{remark}

%%%%%%%%%%%%%%%%%%%%%%%%%%%%%%%%%%%%%%%%%%%%%%%%%%%%%%%%%%%%%%%%%%%%%%%%
 \subsection{The nonlinear boundary conditions}\label{BC_theory} 
%%%%%%%%%%%%%%%%%%%%%%%%%%%%%%%%%%%%%%%%%%%%%%%%%%%%%%%%%%%%%%%%%%%%%%%%
We start with  the case $\epsilon =  0$ and consider the surface term $U^T  (n_i A_i) U = U^T  \tilde A  U $
%\begin{equation}\label{1Dprimalstab_trans_2}
%\oint\limits_{\partial\Omega}U^T  (n_i A_i)   \\\ U \\\ ds = \frac{1}{2}\oint\limits_{\partial\Omega} U^T ((n_i A_i)  +(n_i A_i )^T) U \\\ ds = \oint\limits_{\partial\Omega}U^T   \tilde A   \\\ U \\\ ds,
%\end{equation}
where $\tilde A$ is symmetric (the skew-symmetric part vanish). 
%Recall that if $V=U$ we are dealing with a nonlinear problem, otherwise a variable coefficient problem. 
%In the CFD problems we consider, the Cartesian velocity
%field is transformed to the normal and tangential ones leading to the new vectors $U_n=NU$. 
Next, we transform $\tilde A$ to diagonal form as $ T^T \tilde A  T =  \Lambda = diag( \lambda_i)$ giving
%new transformed variables $W = T^{-1} U$ 
%$W = (N  \tilde T)^{-1}U=T^{-1} U$ 
%and
%The final boundary term becomes
\begin{equation}\label{1Dprimalstab_trans_final}
U^T   \tilde A  U = W^T   \Lambda W  = (W^+)^T   \Lambda^+  W^+ + (W^-)^T   \Lambda^- W^-  \,\ \text{where} \,\ W(U) = T^{-1} U.
\end{equation}
%\begin{equation}\label{1Dprimalstab_trans_final}
%\oint\limits_{\partial\Omega}U^T   \tilde A   \\\ U \\\ ds  \\\ ds = \oint\limits_{\partial\Omega}W^T   \Lambda   \\\ W\\\ ds = \oint\limits_{\partial\Omega}(W^+)^T   \Lambda^+   \\\ W^+ + (W^-)^T   \Lambda^-   \\\ W^-\\\ ds=0.
%\oint\limits_{\partial\Omega}(W^+)^T   \Lambda^+   \\\ W^+ + (W^-)^T   \Lambda^-   \\\ W^-\\\ ds.
%\oint\limits_{\partial\Omega}  \lambda_i W_i^2 \\\ ds.
%\end{equation}
The matrix $T$ could be the standard eigenvector matrix  {\it or} another non-singular matrix that transform $\tilde A$ to diagonal form. In the latter case, Sylvester's Criterion \cite{horn2012} guarantee that the number of positive and negative diagonal entries are the same as the number of positive and negative eigenvalues.
% i.e. the same as the number of positive and negative eigenvalues. 
In the eigenvalue case $T^{-1}=T^T$, but not so in the  general  case.
%In (\ref{1Dprimalstab_trans_final}),  
$\Lambda^+ $ and  $\Lambda^-$  are the positive and negative parts of $\Lambda$ while $W^+$  and $W^-$ are the corresponding variables. Possible zero diagonal entries are included in $\Lambda^+$. 

Note that the diagonal matrix  $\Lambda=\Lambda(U)$ is solution dependent and not a priori bounded (as for linear IBVPs). For linear problems, the number of boundary conditions at a surface position is equal to the number of eigenvalues (or diagonal entries) of $\tilde A$ with the wrong (in this case negative) sign  \cite {nordstrom2020}. 
%Sylvester's Criterion \cite{horn2012}, shows that the number of boundary conditions is equal to the number of $\lambda_i(V)$  with the wrong sign if the rotation matrix  $T$ is non-singular.  
In the nonlinear case, it is more complicated (and still not known) since multiple forms of $W^T   \Lambda W$ may exist \cite{nordstrom2022linear,nordstrom2022linear-nonlinear,Nordstrom2022_Skew_Euler}.
%,  see Section \ref{Eulerex} below and \cite{nordstrom2022linear-nonlinear,Nordstrom2022_Skew_Euler} for examples. 
\begin{remark}\label{Sylvester}
The nonlinear boundary procedure based on (\ref{1Dprimalstab_trans_final}) has similarities with the linear characteristic boundary procedure \cite{kreiss1970,MR436612}. Hence, with a slight abuse of notation we will refer to $\Lambda(U)$ as eigenvalues and to the variables $W(U)$ as characteristic variables, even in the general transformation case.
% even though they play a similar role.
\end{remark}

\section{Previous and updated new general formulation of nonlinear boundary conditions}\label{nonlinear_BC_char}
%\subsection{Linear boundary conditions and estimates}\label{linear_BC}
The starting point for the derivation of nonlinear
 %(and linear)  
 boundary conditions  
 % (\ref{eq:nonlin_BC}) 
 is the diagonal form (\ref{1Dprimalstab_trans_final}) of the boundary term. First we find the formulation (\ref{1Dprimalstab_trans_final}) with a {\it minimal} number of entries in $\Lambda^-$ (as mentioned above, there might be more than one formulation). Next, we specify the transformed characteristic variables $W^-$ in terms of $W^+$ and external data and
% need a way to combine different ingoing characteristic variables $W^-$ and 
 add a scaling possibility. The previous formulation in  \cite{NORDSTROM2024_BC} read
\begin{equation}\label{Gen_BC_form}
 BU-G=S^{-1}(\sqrt{|\Lambda^-|}W^--R \sqrt{\Lambda^+}W^+) - G =0 \,\ \text{or equivalently} \,\  \sqrt{|\Lambda^-|}W^- - R \sqrt{\Lambda^+}W^+ - SG =0
\end{equation}
where $R$ is a matrix combining values of $W^-$ and $W^+$,  $S$ an non-singular scaling matrix and $G$ is the external data. 
%We have used the notation $ |\Lambda|=diag( |\lambda_i|)$ and $ \sqrt{|\Lambda|} =diag(  \sqrt{|\lambda_i|})$.  
The boundary condition (\ref{Gen_BC_form}) where we used the notation $ |\Lambda|=diag( |\lambda_i|)$ and $ \sqrt{|\Lambda|} =diag(  \sqrt{|\lambda_i|})$ is general in the sense that it  can include all components of $W$ suitably combined by the matrices $S$ and $R$. 

We will impose the boundary conditions both strongly and weakly. For the weak imposition we introduce a lifting operator $L_C$
 that enforce the boundary conditions weakly in our governing equation (\ref{eq:nonlin}) 
 %for $\epsilon  \rightarrow  0$ 
 as 
%follows
\begin{equation}\label{eq:nonlin_lif}
P U_t + (A_i U)_{x_i}+A^T_i U_{x_i}+C U+L_C(\tilde \Sigma(BU-G))=\epsilon (D_i)_{x_i}.
%,  \quad t \geq 0,  \quad  \vec x=(x_1,x_2,..,x_k) \in \Omega.
\end{equation}
%Similar to the scalar case, 
The lifting operator exist only at the surface of the domain and for two smooth vector functions  $\phi, \psi$ it satisfies $\int\limits \phi^T   L_C(\psi) d \Omega = \oint\limits \phi^T  \psi ds$
% $\Phi, \Psi$ satisfies $\int\limits_{\Omega} \Phi^T   L_C(\Psi) d \Omega = \oint\limits_{\partial\Omega} \Phi^T  \Psi ds$
%\begin{equation}\label{def_lifting}
%\int\limits_{\Omega} \Phi^T   L_C(\Psi) \\\ d \Omega  \\\  = \oint\limits_{\partial\Omega} \Phi^T  \Psi \\\ ds
%\end{equation}
and enables development  of the numerical boundary procedure in the continuous setting \cite{Arnold20011749,nordstrom_roadmap}.
%Note that the boundary condition (\ref{Gen_BC_form}) including the external data $G$ is nonlinear.
%Also, we observe that the data $G$ must  represent some nonlinear interaction since the boundary terms coming from the equations after applying the energy method are cubic, not quadratic {(as discussed in Section \ref{sec:theory_il} for the scalar case).  
The previous weak implementation of (\ref{Gen_BC_form}) using a lifting operator was given by
\begin{equation}\label{Pen_term_Gen_BC_form}
L_C(\tilde \Sigma(BU-G))= L_C(2( I^-T^{-1})^T \Sigma ( \sqrt{|\Lambda^-|}W^--R \sqrt{\Lambda^+}W^+ - SG )),
\end{equation}
where
%$W=T^{-1}U$, $W^-=I^-W$, $W^+=I^+W$, $I^-+I^+=I$ and 
$\tilde \Sigma = 2( I^-T^{-1})^T \Sigma S$ is a penalty matrix parametrized by the matrix coefficient $S,\Sigma$ and $I^-W=W^-$.
%The matrices $J^-$ and $J^+$ picks out the negative and positive characteristic variables from  $W$. 
% After the derivation of the stability conditions we will return to the boundary condition formulation (\ref{eq:nonlin_BC}) in the original variables.
%\begin{remark}
%To impose nonlinear boundary conditions, we must allow for some nonlinear scaling or interaction since the boundary terms coming from the equations are cubic, not quadratic.
%The lifting operator $L_C$ in (\ref{Pen_term_Gen_BC_form}) is a penalty term that forces $U$ to satisfy the boundary condition.
%\end{remark}
%The previous procedure in \cite{NORDSTROM2024_BC} for a stable nonlinear inhomogeneous boundary condition included the following steps for the determination of the unknowns $R,S,\Sigma$ in (\ref{Gen_BC_form}) and (\ref{Pen_term_Gen_BC_form}).
In \cite{NORDSTROM2024_BC}, the following steps were required in order to determine the unknowns $R,S,\Sigma$ in (\ref{Gen_BC_form}) and (\ref{Pen_term_Gen_BC_form}). \begin{enumerate}

\item Boundedness for strong homogeneous ($G=0$) boundary conditions lead to conditions on $R$.

\item Boundedness for strong inhomogeneous ($G\neq 0$) boundary conditions lead to conditions on $S$.

\item Boundedness for weak homogeneous ($G=0$) boundary conditions lead to conditions on $\Sigma$.

\item Boundedness for weak inhomogeneous ($G\neq 0$) boundary conditions followed  from conditions 1-3.
%The previous conditions on $R, S, \Sigma$ suffice
\end{enumerate}

We will simplify the previous formulation (\ref{Pen_term_Gen_BC_form}), and start by rewriting the boundary term in (\ref{1Dprimalstab_trans_final}) as
\begin{equation}\label{1Dprimalstab_trans_final_simp}
U^T   \tilde A   \\\ U = (A^+U)^T  (A^+ U) - (A^-U)^T  (A^-U)
\end{equation}
where  $A^+=\sqrt{\Lambda^+} T^{-1}$ and $A^-=\sqrt{|\Lambda^-|} T^{-1}$. This reformulation simplifies the boundary condition (\ref{Gen_BC_form}) to
\begin{equation}\label{Gen_BC_form_simp}
 BU-G=S^{-1}(A^- - R A^+)U - G =0 \,\ \text{or equivalently} \,\  (A^--R A^+) U - SG =0
\end{equation}
Note that the new boundary condition (\ref{Gen_BC_form_simp}) is identical to the old one in (\ref{Gen_BC_form}), with the boundary operator $B$  reformulated.  Also the lifting operator in  (\ref{Pen_term_Gen_BC_form}) simplifies by using $\tilde \Sigma=2 (A^-)^T S$ to yield
\begin{equation}\label{Pen_term_Gen_BC_form_simp}
L_C(\tilde \Sigma(BU-G))= L_C(2(A^-)^T  ( (A^--R A^+) U - SG )).
\end{equation}
%Note that the penalty matrix is now predetermined (or removed) leaving a  less parameter dependent form.
\begin{remark}\label{simplified}
 The new boundary procedure simplifies the previous one by {\it i)} reducing the algebra in (\ref{Gen_BC_form}) and  (\ref{Pen_term_Gen_BC_form}) to (\ref{Gen_BC_form_simp}) and (\ref{Pen_term_Gen_BC_form_simp}) respectively, {\it ii)} reducing the number of parameters from three ($R,S,\Sigma$) to two ($R,S$) and  {\it iii)} by operating with $B(U)$ explicitly on the original dependent variable $U$. 
\end{remark}
%\begin{remark}\label{simplified}
 %The new boundary procedure in (\ref{Gen_BC_form_simp}) and (\ref{Pen_term_Gen_BC_form_simp}) simplifies the previous one by {\it i)} reducing the algebra, {\it ii)} reducing the parameters and  {\it iii)} operating only on the original dependent variable $U$. 
%\end{remark}
The following Lemma is the main result of this note and it  replaces Lemma 3.1 in \cite{NORDSTROM2024_BC}.

\begin{lemma}\label{lemma:GenBC}
Consider the boundary term (\ref{1Dprimalstab_trans_final_simp}), the boundary condition (\ref{Gen_BC_form_simp}) and the lifting operator (\ref{Pen_term_Gen_BC_form_simp}).
%Furthermore, let $ |\Lambda^- |=diag( |\lambda^-_i |)$ and $  |\Lambda^- |^{1/2}=diag(  \sqrt{|\lambda_i^-|})$. 

The boundary term augmented with {\bf 1. strong nonlinear homogeneous boundary conditions} is positive semi-definite if the matrix $R$ is such that
%$W^T   \Lambda   W  = (W^+)^T ( \Lambda^+ - R^T  |\Lambda^- |  R)W^+  \geq 0$
%\[
% W^T   \Lambda   W  = (W^+)^T ( \Lambda^+ - R^T  |\Lambda^- |  R)W^+
%\]
\begin{equation}\label{R_condition}
I- R^T  R  \geq 0.
\end{equation}

The boundary term augmented with {\bf 2. strong nonlinear inhomogeneous boundary conditions} is bounded by the data $G$ if
the matrix  $R$ satisfies (\ref{R_condition}) with strict inequality and the matrix $S$ is such that
\begin{equation}\label{S_condition}
I- S^T  S - (R^T S)^T (I- R^T  R)^{-1} (R^T S)  \geq 0 \quad  \text{where}  \quad (I- R^T  R)^{-1}=\sum^{\infty}_{k=0}(R^T R)^k.
%S =   \tilde S^{-1} |\Lambda^- |^{1/2} \ \mbox{with 
%$\tilde S$ sufficiently small in an absolute sense.}
\end{equation}

The boundary term augmented with {\bf 3. weak nonlinear homogeneous boundary conditions} is positive semi-definite if the matrix $R$
satisfies (\ref{R_condition}).

The boundary term augmented with {\bf 4. weak nonlinear inhomogeneous boundary conditions} is bounded by the data $G$ if
the matrix $R$ satisfies (\ref{R_condition}) with strict inequality and the matrix $S$ satisfies  (\ref{S_condition}).
% and the matrix $\Sigma$ is given by (\ref{Sigma_condition}).
\end{lemma}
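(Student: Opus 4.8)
The plan is to reduce all four cases to elementary quadratic-form computations by introducing the shorthand $V_+ = A^+U$ and $V_- = A^-U$. With this notation the boundary term (\ref{1Dprimalstab_trans_final_simp}) becomes $U^T\tilde A U = V_+^T V_+ - V_-^T V_-$ and the boundary condition (\ref{Gen_BC_form_simp}) reads $V_- - R V_+ = SG$. Because $A^+=\sqrt{\Lambda^+}T^{-1}$ and $A^-=\sqrt{|\Lambda^-|}T^{-1}$ select the complementary (positive/negative eigenvalue) components of $W=T^{-1}U$, the nonzero parts of $V_+$ and $V_-$ may be treated as independent and free as $U$ ranges over the boundary state space. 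This is precisely what distinguishes the strong cases, where the constraint $V_- = R V_+ + SG$ is imposed exactly, from the weak cases, where it is not.

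First I would dispatch the two homogeneous cases. For the strong case ($G=0$) I substitute $V_- = R V_+$ to obtain $U^T\tilde A U = V_+^T(I - R^T R)V_+$, which is nonnegative for every $V_+$ exactly when (\ref{R_condition}) holds. For the weak case I first assemble the augmented boundary term: applying the energy method to (\ref{eq:nonlin_lif}) and using the lifting identity $\int \phi^T L_C(\psi)\,d\Omega = \oint \phi^T\psi\,ds$ together with $\tilde\Sigma = 2(A^-)^T S$ turns the surface contribution into $U^T\tilde A U + 2V_-^T(V_- - R V_+ - SG)$, where the factor $2$ in $\tilde\Sigma$ is exactly what flips the sign of the $V_-^T V_-$ term. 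At $G=0$ this equals $V_+^T V_+ + V_-^T V_- - 2V_-^T R V_+ = \|V_- - R V_+\|^2 + V_+^T(I - R^T R)V_+$, again nonnegative under (\ref{R_condition}), which settles case 3.

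Next I would treat the inhomogeneous cases, where condition (\ref{S_condition}) enters. For the strong case I insert $V_- = R V_+ + SG$ into $V_+^T V_+ - V_-^T V_-$ and complete the square in the now-free variable $V_+$. Writing $M = I - R^T R$, which is invertible with the stated Neumann series once (\ref{R_condition}) is strict, minimizing over $V_+$ yields the lower bound $U^T\tilde A U \geq -G^T S^T(I + R M^{-1}R^T)SG$. For the weak case the augmented term $V_+^T V_+ + V_-^T V_- - 2V_-^T R V_+ - 2V_-^T SG$ is a joint quadratic form in $z=(V_+;V_-)$ with Hessian $\left(\begin{smallmatrix} I & -R^T \\ -R & I \end{smallmatrix}\right)$, positive definite exactly when (\ref{R_condition}) is strict since its Schur complement equals $M$. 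Minimizing over the free $z$ produces the lower bound $-G^T S^T(I - R R^T)^{-1}SG$.

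The key reconciling step, and the part I expect to be the main obstacle, is showing that these two bounds coincide, i.e. $(I - R R^T)^{-1} = I + R M^{-1}R^T$. This is the push-through (Woodbury) identity: from $R(I - R^T R)^{-1} = (I - R R^T)^{-1}R$ one gets $I + R M^{-1}R^T = (I - R R^T)^{-1}\big[(I - R R^T) + R R^T\big] = (I - R R^T)^{-1}$. With the two bounds now identical, condition (\ref{S_condition}) is precisely the statement $S^T(I + R M^{-1}R^T)S \leq I$, so both inhomogeneous cases give $U^T\tilde A U \geq -G^T G$, i.e. the boundary term is bounded by the data $G$. The remaining care points are verifying convergence of the Neumann series (equivalently $\rho(R^T R)<1$, the strict form of (\ref{R_condition})), so that $M^{-1}$ and the completions of squares are legitimate, and checking the dimensional compatibility of $R$ and $S$ with the subspaces selected by $A^+$ and $A^-$.
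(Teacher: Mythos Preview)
Your proof is correct; cases 1 and 3 are handled exactly as in the paper (substitute the constraint, respectively complete the square in $V_-$, to exhibit the sum $V_+^T(I-R^TR)V_+ + \|V_- - RV_+\|^2$). For cases 2 and 4 you take a genuinely different but equivalent route: you minimize over the free variables and invoke the Woodbury identity $(I-RR^T)^{-1}=I+R(I-R^TR)^{-1}R^T$ to show that the two minimum values coincide and that (\ref{S_condition}) is exactly the statement $S^T(I-RR^T)^{-1}S\le I$.

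The paper instead writes an \emph{identity}, not a bound. In case 2 it adds and subtracts $G^TG$ to obtain
\[
U^T\tilde A U=
\begin{bmatrix} A^+U\\ G\end{bmatrix}^T
\begin{bmatrix} I-R^TR & -R^TS\\ -S^TR & I-S^TS\end{bmatrix}
\begin{bmatrix} A^+U\\ G\end{bmatrix}-G^TG,
\]
and then observes that (\ref{S_condition}) is literally the Schur complement of this block matrix, so the quadratic form is positive semi-definite under the stated hypotheses. Case 4 is reduced to case 2 by first completing the square in $V_-$, producing the extra nonnegative term $\|V_--RV_+-SG\|^2$ plus the same block form. The paper's approach is shorter because it reads off (\ref{S_condition}) directly as a Schur complement, avoiding your Woodbury detour; your optimization approach has the mild bonus of showing the bound $-G^TG$ is sharp (the infimum is attained), and of making transparent why the weak and strong inhomogeneous cases yield the \emph{same} data bound.
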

\begin{remark}\label{explain_notation_nonstadard} 
We have used the notation $A \geq 0$ to indicate that the matrix $A$ is positive semi-definite above.
%  and $A > 0$  to indicate positive definiteness in (\ref{R_condition}) and (\ref{S_condition}).
%, and will continue to use it below.
\end{remark}
\begin{proof} 
%We proceed in the step-by-step manner indicated in the Lemma above.
The proof of Lemma \ref{lemma:GenBC} is step-by-step identical to the proof of Lemma 3.1 in \cite{NORDSTROM2024_BC} by replacing $\sqrt{|\Lambda^-|}W^-,\sqrt{\Lambda^+}W^+$ with  $A^- U, A^+ U$ respectively and $\tilde \Sigma = 2( I^-T^{-1})^T \Sigma S$ with $\tilde \Sigma=2 (A^-)^T S$.
%\begin{equation}\label{Pen_term_Gen_BC_form}
%L_C(\tilde \Sigma(BU-G))= L_C(2( I^-T^{-1})^T \Sigma ( \sqrt{|\Lambda^-|}W^--R \sqrt{\Lambda^+}W^+ - SG )),
%\end{equation}
%where
%$W=T^{-1}U$, $W^-=I^-W$, $W^+=I^+W$, $I^-+I^+=I$ and 
%$\tilde \Sigma = 2( I^-T^{-1})^T \Sigma S$ 
%where  $A^+=\sqrt{\Lambda^+} T^{-1}$ and $A^-=\sqrt{|\Lambda^-|} T^{-1}$
%$\tilde \Sigma=2 (A^-)^T S$ 
\end{proof}
%\begin{remark}\label{explain_proof_simplified} 
%The new formulation of the weak boundary condition now contains on two matrix parameters $R$ and $S$
%\end{remark}

%\begin{remark}
%The proof of Lemma \ref{lemma:GenBC} is step-by-step identical to the proof of Lemma 3.1 in \cite{NORDSTROM2024_BC}, but simplified.
%can be used to prove that the estimates (\ref{1Dprimalstab}) and (\ref{1Dprimalstab_strong}) in Proposition \ref{lemma:Matrixrelation} holds.
%\end{remark}
%The matrices $R,S$ satisfies the conditions in  Lemma 3.1 in \cite{NORDSTROM2024_BC}.
%This concludes the analysis of the formulation of nonlinear boundary conditions for first derivative terms.
%\begin{remark}
%The new boundary procedure presented above generalise the well known linear characteristic boundary procedure  \cite{kreiss1970,MR436612} by inserting the additional scaling with the solution dependent eigenvalues. We show below that the new nonlinear boundary procedure can also be used to bound IBVPs involving second derivatives similar to what has been done in the linear case \cite{nordstrom2005,MR1339182,MR1669660}.
%\cite{MR1339182,MR1669660,MR2177803}. 
%\end{remark}

%%%%%%%%%%%%%%%%%%%%%%%%%%%%%%%%%%%%%%%%%%%%%%%%%%%%%%%%%%%%%%%%%%%%%%%%
\subsection{Extension to include viscous terms}\label{nonlinear_BC_visc}
%%%%%%%%%%%%%%%%%%%%%%%%%%%%%%%%%%%%%%%%%%%%%%%%%%%%%%%%%%%%%%%%%%%%%%%%
The boundary terms to consider in the case when $\epsilon >  0$  are given in Proposition \ref{lemma:Matrixrelation_complemented}. The argument in the surface integral
can be reformulated into the first derivative setting.  By introducing the notation $n_i D_i/2=\tilde F$ for the viscous flux we rewrite the boundary terms as
\begin{equation}\label{newflux_form_1}
U^T  (n_i A_i) U -   \epsilon U^T (n_i D_i) =  U^T \tilde A U -  \epsilon U^T  \tilde F - \epsilon \tilde F^T  U=
\begin{bmatrix}
U \\
\epsilon \tilde F
\end{bmatrix}^T
\begin{bmatrix}
\tilde A & - I \\
 - I &  0
\end{bmatrix}
\begin{bmatrix}
U \\
 \epsilon \tilde F
\end{bmatrix}
% (U-\epsilon  \tilde A^{-1} \tilde F)^T \tilde A (U-\epsilon  \tilde A^{-1} \tilde F)-(\epsilon  \tilde A^{-1} \tilde F)^T  \tilde A  (\epsilon  \tilde A^{-1} \tilde F).
\end{equation}
where $I$ is the identity matrix. We can now formally diagonalise the boundary terms in (\ref{newflux_form_1}), apply the boundary condition (\ref{Gen_BC_form_simp}) in combination with (\ref{Pen_term_Gen_BC_form_simp}) and  Lemma \ref{lemma:GenBC} to obtain energy bounds.

%%%%%%%%%%%%%%%%%%%%%%%%%%%%%%%%%%%%%%%%%%%%%%%%%%%%%%%%%%%%%%%%%%%%%%%%
\section{Summary}\label{sec:conclusion}
%%%%%%%%%%%%%%%%%%%%%%%%%%%%%%%%%%%%%%%%%%%%%%%%%%%%%%%%%%%%%%%%%%%%%%%%
We have simplified and removed unnecessary parameter dependencies in previous work on nonlinear boundary conditions in \cite{NORDSTROM2024_BC}. This will remove complexities from its practical numerical implementation.
%%%%%%%%%%%%%%%%%%%%%%%%%%%%%%%%%%%%%%%%%%%%%%%%%%%%%%%%%%%%%%%%%%%%%%%%
\section*{Acknowledgment}
%%%%%%%%%%%%%%%%%%%%%%%%%%%%%%%%%%%%%%%%%%%%%%%%%%%%%%%%%%%%%%%%%%%%%%%%
J.N.  was supported by Vetenskapsr{\aa}det, Sweden [no.~2021-05484 VR] and University of Johannesburg Global Excellence and Stature Initiative Funding.

%%%%%%%%%%%%%%%%%%%%%%%%%%%%%%%%%%%%%%%%%%%%%%%%%%%%%%%%%%%%%%%%%%%%%%%%
%\section*{References}
%\section{References}
%%%%%%%%%%%%%%%%%%%%%%%%%%%%%%%%%%%%%%%%%%%%%%%%%%%%%%%%%%%%%%%%%%%%%%%%
\bibliographystyle{elsarticle-num}
%\bibliography{References_Jan,References_andrew,References_Fredrik}
%\bibliography{References_Jan,References_Fredrik}
\bibliography{References_Jan}

\begin{thebibliography}{10}
\expandafter\ifx\csname url\endcsname\relax
  \def\url#1{\texttt{#1}}\fi
\expandafter\ifx\csname urlprefix\endcsname\relax\def\urlprefix{URL }\fi
\expandafter\ifx\csname href\endcsname\relax
  \def\href#1#2{#2} \def\path#1{#1}\fi

\bibitem{HEDSTROM1979222}
G.~Hedstrom, Nonreflecting boundary conditions for nonlinear hyperbolic
  systems, Journal of Computational Physics 30~(2) (1979) 222--237.

\bibitem{Dubois198893}
F.~Dubois, P.~Le~Floch, Boundary conditions for nonlinear hyperbolic systems of
  conservation laws, Journal of Differential Equations 71~(1) (1988) 93 –
  122.

\bibitem{NYCANDER2008108}
J.~Nycander, A.~M. Hogg, L.~M. Frankcombe, Open boundary conditions for
  nonlinear channel flow, Ocean Modelling 24~(3) (2008) 108--121.

\bibitem{Nordstrom2013}
J.~Nordstr{\"o}m, Linear and Nonlinear Boundary Conditions for Wave Propagation
  Problems, Springer Berlin Heidelberg, Berlin, Heidelberg, 2013, pp. 283--299.

\bibitem{svard2014}
M.~Sv{\"a}rd, H.~{\"O}zcan, Entropy-stable schemes for the {E}uler equations
  with far-field and wall boundary conditions, Journal of Scientific Computing
  58~(1) (2014) 61--89.

\bibitem{nordstrom2019}
J.~Nordstr\"{o}m, C.~L. Cognata, Energy stable boundary conditions for the
  nonlinear incompressible {N}avier--{S}tokes equations, Mathematics of
  Computation 88~(316) (2019) 665--690.

\bibitem{svard2021entropy}
M.~Sv{\"a}rd, Entropy stable boundary conditions for the {E}uler equations,
  Journal of Computational Physics 426 (2021) 109947.

\bibitem{nordstrom2022linear}
J.~Nordström, A.~R. Winters, A linear and nonlinear analysis of the shallow
  water equations and its impact on boundary conditions, Journal of
  Computational Physics 463 111254 (2022).

\bibitem{NORDSTROM2024_BC}
J.~Nordström, Nonlinear boundary conditions for initial boundary value
  problems with applications in computational fluid dynamics, Journal of
  Computational Physics 498 112685 (2024).

\bibitem{nordstrom_roadmap}
J.~Nordstr\"{o}m, A roadmap to well posed and stable problems in computational
  physics, {Journal of Scientific Computing} 71~(1) (2017) 365--385.

\bibitem{nordstrom2022linear-nonlinear}
J.~Nordström, Nonlinear and linearised primal and dual initial boundary value
  problems: When are they bounded? how are they connected?, Journal of
  Computational Physics 455 111001 (2022).

\bibitem{Nordstrom2022_Skew_Euler}
J.~Nordström, A skew-symmetric energy and entropy stable formulation of the
  compressible {E}uler equations, Journal of Computational Physics 470 111573
  (2022).

\bibitem{nordstrom2024skewsymmetric_jcp}
J.~Nordström, A skew-symmetric energy stable almost dissipation free
  formulation of the compressible {N}avier-{S}tokes equations, Journal of
  Computational Physics 512 113145 (2024).

\bibitem{horn2012}
R.~A. Horn, C.~R. Johnson, Matrix Analysis, Cambridge University Press, 2012.

\bibitem{nordstrom2020}
J.~Nordstr\"{o}m, T.~M. Hagstrom, The number of boundary conditions for initial
  boundary value problems, SIAM Journal on Numerical Analysis 58~(5) (2020)
  2818--2828.

\bibitem{kreiss1970}
H.-O. Kreiss, Initial boundary value problems for hyperbolic systems, Commun.
  Pur. Appl. Math. 23~(3) (1970) 277--298.

\bibitem{MR436612}
B.~Engquist, A.~Majda, Absorbing boundary conditions for the numerical
  simulation of waves, Mathematics of Computation 31~(139) (1977) 629--651.

\bibitem{Arnold20011749}
D.~Arnold, F.~Brezzi, B.~Cockburn, L.~Donatella~Marini, Unified analysis of
  discontinuous {G}alerkin methods for elliptic problems, SIAM Journal on
  Numerical Analysis 39~(5) (2001) 1749--1779.

\end{thebibliography}

\end{document}